\newcommand{\set}[1]{\{ #1 \} }
\newcommand{\wh}{\widehat}
\newcommand{\bd}{\partial}
\newtheorem{theorem}{Theorem}[section]
\theoremstyle{definition}
\newtheorem{definition}[theorem]{Definition}
\newtheorem{note}{Notation}
\theoremstyle{cor}
\newtheorem*{ack}{Acknowledgments}
\theoremstyle{remark}
\theoremstyle{example}
\numberwithin{equation}{section}
\begin{document}

\title{Tunnel One, Fibered Links}

\author{Matt Rathbun}
\address{Matt Rathbun, Department of Mathematics, Michigan State University, East Lansing, MI, 48824}
\email{mrathbun@math.msu.edu}

\keywords{fibered, knot, link, tunnel, monodromy}

\begin{abstract}
Let $K$ be a tunnel number one, fibered link in $S^3$, with fiber $F$, and unknotting tunnel $\tau$. We show that $\tau$ can be isotoped to lie in $F$.

\end{abstract}

\maketitle

\section{Introduction and Motivation}
\label{sec:Introduction}

The study of fibered knots and links is as important today as ever. Giroux's correspondence between open book decompositions and contact structures \cite{GirGCD3DS} mingles classical fibered links with more modern contact geometry. Sutured manifold theory continues to reveal information about fibrations (see, for instance, \cite{SchaThoSKSI} and \cite{NiDSYF3M}). And fibered links are related to the newest advances in Floer homology, as Knot Floer homology detects fibered links (\cite{NiKFHDFK}) and Sutured Floer homology intersects both contact geometry and sutured manifold theory. 

Tunnel number one links are among the most-studied links. Much of the work on tunnel number one links revolves around trying to isotope the tunnel to sit nicely with respect to some additional structure in the $3$-manifold, including a hyperbolic metric (\cite{AdaUTH3M}, \cite{AdaReiUT2BKLC}, \cite{AkiNakSakSVGMOHDSWL}, \cite{CooLacPurLUT}), polyhedral decompositions (\cite{SakWeeECDHLC}, \cite{HeaSonUT-P237}), bridge decompositions (\cite{LacCAKTN1}, \cite{GodSchaThoLUT}), Seifert surfaces (\cite{SchaThoUTSS}), and fibrations (\cite{SakUTCDPTBOC}). These studies, and others, have led to the classification of tunnels for many classes of knots and links, including torus knots (\cite{BoiRosZieOHDTKERSFS}), satellite knots (\cite{MorSakOUTK}), non-simple links (\cite{EudUchNSLTN1}), $2$-bridge knots (\cite{MorSakOUTK}, \cite{KobCUT2BK}), and $2$-bridge links (\cite{JonC2GL}, \cite{MorOCTN1L}). 

Further, the Berge Conjecture states that if a knot admits a lens space Dehn surgery, then it is in one of the families of knots classified by John Berge. Many are working on this long-standing conjecture, with recent progress  contributed by Ozsv\'{a}th and Szab\'{o} \cite{OzsSzaKFHLSS}, Hedden \cite{HedFHBCKALSS}, Baker, Grigsby, and Hedden \cite{BakGriHedGDLSCKFH}, Saito \cite{SaiDS11KLS}, and Williams \cite{WilLSSPSTC}, among others. Yi Ni recently proved that if a knot admits a lens space surgery, then it is a fibered knot \cite{NiKFHDFK}. Additionally, all Berge knots are both fibered and tunnel number one, so further understanding tunnel one, fibered knots could have profound impacts on the Conjecture.

Jesse Johnson investigated genus $2$ Heegaard splittings of closed surface bundles over the circle \cite{JohSBG2HS}. This paper looks at the relationship between unknotting tunnels and fibrations for link complements.

\begin{theorem}
\label{theorem:TunnelNicelyInFiber}
Let $K$ be an oriented, fibered, tunnel number one link in $S^3$, with fiber $F$, and unknotting tunnel $\tau$. Then $\tau$ can be isotoped to lie in $F$.
\end{theorem}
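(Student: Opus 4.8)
The plan is to set the two structures on $M := S^3 \setminus \mathring N(K)$ against each other. The fibration $\pi \colon M \to S^1$ with fiber $F$ presents $M$, cut along $F$, as the product sutured manifold $P = F \times I$ (with $R_{\pm} = F \times \{0,1\}$ and annular sutures $\partial F \times I$): a maximally rigid building block. The tunnel, meanwhile, yields a genus-two Heegaard splitting $M = C \cup_{\Sigma} W$, where $C = (\partial M \times I) \cup N(\tau)$ is a compression body with $\partial_{-} C = \partial M$, $W = S^3 \setminus \mathring N(K \cup \tau)$ is a genus-two handlebody, and, up to isotopy, $\tau$ is the core of the single $1$-handle of $C$. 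We must isotope $\tau$ into $F$, and I would do this by putting $F$ into normal form with respect to the splitting, minimizing $|F \cap \tau|$, and then showing the minimum forces $\tau \subset F$.

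For the normalization, I would isotope $F$ --- allowing $\partial F$ to slide on $\partial M$ --- so that $F$ meets the $1$-handle $N(\tau)$ of $C$ in a collection of meridian (product) disks, so that $|F \cap \tau|$ equals the number $n$ of these disks; so that $F \cap W$ is incompressible and $\partial$-incompressible in the handlebody $W$; and so that $F$ meets $\partial M \times I \subset C$ in vertical annuli and rectangles. A fiber is incompressible in $M$, so the standard innermost-disk and outermost-arc surgeries accomplish all of this without increasing $n$. Then pick $F$ in its isotopy class so as to minimize lexicographically the pair $(n,\, \text{complexity of } F \cap W)$, for a suitable complexity (say $-\chi$ together with the number of components).

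The crux is to show $n = 0$, and then that $\tau$ lies in $F$. Suppose $n \ge 1$ and cut everything along $F$: $W$, the $1$-handle, and $\tau$ all land in the product $P = F \times I$, with $\tau$ becoming $n+1$ properly embedded arcs. Since $W$ is a handlebody, $\partial W$ compresses; by minimality every compressing disk must be met by $F$, and cutting-and-pasting then produces, inside $P$, either a bigon between a subarc of $\tau$ and $F$ or a $\partial$-compression of one of the $\tau$-arcs onto $R_{\pm}$. Either way, pushing back to $M$ gives an isotopy of $\tau$ removing a point of $F \cap \tau$, contradicting minimality. The essential mechanism is that $P$ is \emph{taut} and admits no nontrivial sutured-manifold decomposition, so it cannot absorb the extra topology a genuinely non-removable crossing would require --- this is exactly where ``$F$ is the fiber,'' not merely a minimal-genus Seifert surface, is used. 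With $n = 0$, a final argument of the same flavor --- now analyzing the single arc $\tau \subset P$ against the product structure --- shows $\tau$ is parallel into $R_{-} = F$, so $\tau$ isotopes into $F$.

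The hard part is the crux: extracting from ``$W$ is a handlebody'' an honest, complexity-reducing isotopy of $\tau$. One must track the boundary pattern of $F \cap W$ on $\Sigma$ --- it carries copies of the meridian of $\tau$ alongside arcs of $\partial F$ --- precisely enough to locate an outermost reduction, and make sure that reducing $n$ does not create new intersections elsewhere (the role of the lexicographic complexity), invoking the sutured-manifold machinery that forbids nontrivial decompositions of a taut product. Two further points need attention: the two-component link case, where one tracks the two boundary tori and which of them meets $\tau$; and the behavior of $F$ near $\partial M$, where the argument must respect the annular sutures. An alternative route --- comparing a sweepout of $M$ by fibers with one arising from the Heegaard splitting, in the spirit of Jesse Johnson's analysis of genus-two splittings of surface bundles \cite{JohSBG2HS} --- would presumably also work, but the sutured-manifold argument seems the most direct.
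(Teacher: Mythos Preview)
Your outline for reaching $n=0$ is essentially the Scharlemann--Thompson argument adapted to fibered links, and the paper proceeds the same way there (its Theorem~\ref{theorem:Disjoint}), though the case analysis is more delicate than your bigon/$\partial$-compression dichotomy suggests: one must allow \emph{slides} of $\tau$, and handle subcases where the outermost arc $e$ is inessential in $F$ but essential in $F\setminus\tau$, where both ends of the complementary arc $f$ lie on the same meridian of $\tau$, etc.

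The genuine gap is your final step. Once $\tau$ is an arc in $P=F\times I$ with endpoints on the suture $\partial F\times I$, you assert that ``an argument of the same flavor'' shows $\tau$ is parallel into $R_-$. But the outermost-subdisk mechanism does \emph{not} deliver this. Take an essential disk $D'$ for the handlebody $W$ and an outermost arc of $D'\cap F$, cutting off $D\subset P$. The boundary of $D$ is not $\alpha\cup(\text{one arc on }\partial n(\tau))$: it consists of the single arc $\alpha$ on $F$, \emph{several} arcs $\nu_i$ on $\partial F\times I$, and \emph{several} spanning arcs $\lambda_j$ on the annulus $\partial n(\tau)$. There is now nothing to reduce ($n=0$ already), and $D$ is not a parallelism of $\tau$ into $F$; nor does any general fact about taut products convert it into one. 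An arbitrary properly embedded arc in $F\times I$ with endpoints on the sutures need not be boundary-parallel, and beyond producing $D$ you have not used in this step that $W$ is a handlebody. Your appeal to ``sutured-manifold machinery that forbids nontrivial decompositions of a taut product'' is not a substitute for an argument here.

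The paper's device is different in kind. Double $N=F\times I$ along $\partial F\times I$ to obtain $\wh N\cong\wh F\times I$, so that $\tau$ doubles to a \emph{knot} $\wh\tau$ and $D$ doubles to a planar surface $\wh D$ whose boundary is $\wh\alpha\subset\wh F$ together with parallel curves $\wh\lambda_j$ on the torus $\partial n(\wh\tau)$. Performing Dehn surgery on $\wh\tau$ along that slope caps the $\wh\lambda_j$, turning $\wh D$ into a compressing disk for $\wh F$ in $(\wh F\times I)_{surg}$. Now Theorem~\ref{theorem:Surgery} (Scharlemann--Thompson, surgery on a knot in $\text{surface}\times I$) forces $\wh\tau$ to be parallel into $\wh F\times\{0\}$ via an annulus $A$, and a short analysis of $A\cap(\partial F\times I)$ yields the parallelism of the original arc $\tau$ into $F$. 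This doubling-plus-surgery step is the missing idea in your proposal.
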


\vspace{3 mm}

\begin{ack} 
The author would like to thank Abigail Thompson. The author was supported in part by NSF VIGRE Grants DMS-0135345 and DMS-0636297, and the RTG Grant DMS-0739208.
\end{ack}

\section{Background and Definitions}
\label{sec:Definitions}

\subsection{$3$-Manifolds}

\begin{note} Let $A$ be subset of a $3$-manifold $M$. We fix some notation. Let $n(A)$ denote a small open neighborhood of $A$ in $M$. If $F$ is a properly embedded surface in M, let $M | F = M \setminus n(F)$. If $S$ is the boundary of $M$, we will refer to $S| \partial F = S \setminus n(\bd F)$. For convenience, we will also sometimes refer to this as $S | F$.
\end{note}

\begin{definition} Let $F$ be a surface properly embedded in a $3$-manifold $M$. $F$ is said to be \emph{compressible} if there exists a disk $D$ embedded in $M$ with $\bd D = D \cap F$ an essential curve in $F$. Then $D$ is called a \emph{compressing disk} for $F$. If $F$ is not compressible, and is not a $2$-sphere, then it is called \emph{incompressible}. $F$ is said to be \emph{boundary compressible} if there exists a disk $D$ embedded in $M$ with $D \cap F = \alpha \subset \bd D$, $D \cap \bd M = \beta \subset \bd D$, $\alpha$ is an essential arc in $F$, and $\alpha \cap \beta = \bd \alpha = \bd \beta$, $\alpha \cup \beta = \bd D$. In this case, $D$ is called a \emph{boundary compression disk}. If $F$ is not boundary compressible, it is called \emph{boundary incompressible}.
\end{definition}

\begin{definition} A \emph{compression body} $V$ is the result of taking the product of a surface with $[0, 1]$, and attaching $2$-handles along $S \times \{0\}$, and then $3$-handles along any resulting $2$-sphere components. $S \times \{1\}$ is called $\partial_+ V$, and $\partial V \setminus \partial_+ V$ is called $\partial_- V$. A \emph{handlebody} is a compression body where $\partial_- V = \emptyset$. A \emph{Heegaard splitting} is a triple $(S, V, W)$, where $S$ is a surface, $V$ and $W$ are compression bodies, $\partial_+ V = \partial_+ W = S$, and $M = V \cup_{S} W$.
\end{definition}

\begin{definition} Let $K$ be a knot in a $3$-manifold $M$, and let $\lambda$ be an essential closed curve in $\bd n(K)$. Let $M'$ be the manifold obtained from $M$ by removing $n(K)$, and attaching a solid torus $S^1 \times D^2$ to $M \setminus n(K)$ via a homeomorphism of the boundaries such that $\set{pt.} \times \bd D^2$ is identified with the curve $\lambda$. Then $M'$ is said to be the result of \emph{$\lambda$-sloped Dehn surgery} on $M$.  
\end{definition}

\subsection{Tunnels}
 
 \begin{definition}
A link $L$ in a $S^3$ is called a \emph{tunnel number one} link if there exists an arc $\tau$ properly embedded in $S^3 \setminus n(L)$ such that $S^3 \setminus n(L \cup \tau)$ is a handlebody. Then $\tau$ is called a \emph{tunnel} for $L$.
 \end{definition}
 
 Observe that the complement of a tunnel number one link has a genus $2$ Heegaard splitting. Also, note that a tunnel one link has at most $2$ components, and if it has two components, then any tunnel must have one endpoint on each component.
 
 More generally, a knot is \emph{tunnel number $n$} if $n$ is the smallest number such that there exists a collection of arcs $\set{\tau_1, \dots, \tau_n}$ such that $S^3 \setminus n(L \cup \tau_1 \cup \cdots \cup \tau_n)$ is a handlebody. 
 
\subsection{Fibered Links}

\begin{definition} Let $L \subset S^3$ be a link. A \emph{Seifert surface} for $L$ is a compact, connected, orientable surface $F$ embedded in $S^3$ such that $\bd F = L$. 
\end{definition}

\begin{definition} A map $f : E \to B$ is a \emph{fibration} with \emph{fiber} $F$ if for every point $p \in B$, there is a neighborhood $U$ of $p$ and a homeomorphism $h : f^{-1}(U) \to U \times F$ such that $f = \pi_1 \circ h$, where $\pi_1 : U \times F \to U$ is projection to the first factor. $E$ is called the \emph{total} space, and $B$ is called the \emph{base} space. Each set $f^{-1}(b)$ is called a \emph{fiber}, and is homeomorphic to $F$. 
\end{definition}
 
\begin{definition} A link $L \subset S^3$ is said to be \emph{fibered} if there is a fibration of $S^3 \setminus n(L)$ over $S^1$, and the fibration is well-behaved near $L$. That is, each component $L_i$ of $L$ has a neighborhood $S^1 \times D^2$, with $L_i \cong S^1 \times \set{0}$ such that $f|_{S^1 \times (D^2 \setminus \set{0})}$ is given by $(x, y) \to \frac{y}{|y|}$.
\end{definition}

Each fiber of a fibered link is a Seifert surface for the link. The complement of a fibered link is foliated by copies of this Seifert surface. Cutting along one of these Seifert surface produces a surface cross the interval.

\begin{definition} Let $K$ be a fibered link in $S^3$. Then $S^3 \setminus n(K)$ can be obtained from $F \times I$, with $F$ a fiber, by identification $(x, 0) \sim (h(x), 1)$, for $x \in F$, where $h: F \to F$ is an orientation-preserving homeomorphism which is the identity on $\bd F$. We call $h$ a \emph{monodromy} map.
\end{definition}

\subsection{Theorems}
 
Our starting point is a theorem proven by Scharlemann and Thompson.

\begin{theorem}[\cite{SchaThoUTSS}] \label{theorem:KnotDisjoint} Suppose $K$ is a knot in $S^3$, and $\tau$ an unknotting tunnel for $K$. Then $\tau$ may be slid and isotoped until it is disjoint from some minimal genus Seifert surface for $K$.
\end{theorem}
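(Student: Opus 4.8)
The plan is to first push $\tau$ off a fiber and then straighten it within the resulting product region. The starting observation is that, since $K$ is fibered, the fiber $F$ is a minimal genus Seifert surface for $K$, and in fact the \emph{unique} such surface up to isotopy (any minimal genus Seifert surface is incompressible, hence isotopic to the fiber). Consequently the minimal genus Seifert surface produced by Theorem~\ref{theorem:KnotDisjoint} is isotopic to $F$, so after sliding and isotoping we may assume $\tau \cap F = \emptyset$. One genuine gap here is that Theorem~\ref{theorem:KnotDisjoint} is stated only for knots: the two-component case requires the analogous disjointness statement for tunnel number one links, and establishing (or citing) this extension is the first thing I would pin down, using that any tunnel of a two-component link runs between the two components and that the fiber remains the unique minimal genus Seifert surface.

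With $\tau$ disjoint from $F$, cut $M = S^3 \setminus n(K)$ along $F$. Since $M$ is the mapping torus of the monodromy $h$, this yields a product $F \times I$, and $\tau$ becomes a properly embedded arc whose endpoints lie on the vertical boundary $(\partial F)\times I = \partial n(K)|F$. Note that $\tau$ cannot be a vertical arc $\set{pt}\times I$, since its endpoints must lie on $\partial n(K)$ rather than on the cut copies $F\times\set{0}, F\times\set{1}$; this already points to a single level surface as the correct target. Two facts will drive the argument: each level $F\times\set{t}$ is incompressible, and the tunnel is dual to a disk $E$ meeting $\tau$ transversely once (a cocore of the tunnel handle), so that leveling $E$ will level $\tau$. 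The tunnel hypothesis enters through the handlebody $S^3\setminus n(K\cup\tau)$, which constrains how $\tau$ may sit inside $F\times I$.

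The heart of the proof is to isotope $\tau$ into a single level $F\times\set{t}$. I would sweep $M$ out by the fibers $\set{F\times\set{t}}$ and analyze the intersections of $E$ (equivalently of $\tau$) with the fibers as $t$ varies, in the spirit of Johnson's study of genus two Heegaard splittings of surface bundles \cite{JohSBG2HS}. Incompressibility of the fibers lets me remove closed intersection curves and inessential arcs by innermost-disk and outermost-arc exchanges, while the handlebody structure of $S^3\setminus n(K\cup\tau)$ lets me cancel opposing critical points of the height function $p|_\tau$; the goal is to reduce the number of critical points to zero, leaving $\tau$ horizontal in one fiber. The hard part is exactly this leveling step: one must rule out that $\tau$ is forced to cross many fibers essentially, i.e. that the sweepout width cannot be driven down. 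This is where the handlebody hypothesis (which forbids essential knotting of $\tau$ in the product) and incompressibility of $F$ must be combined, together with a careful thin-position bookkeeping of the maxima and minima of $p|_\tau$.

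Finally, once $\tau \subset F\times\set{t}$, regluing $F\times I$ to recover $M$ carries the level $F\times\set{t}$ to a fiber of the fibration, and the two endpoints of $\tau$, now both at height $t$ on $(\partial F)\times\set{t}$, lie on $K = \partial F$. Hence $\tau$ is a properly embedded arc contained in the fiber $F$, which is the desired conclusion.
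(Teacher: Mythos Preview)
Your proposal does not address the stated theorem at all. The statement you were asked to prove is the Scharlemann--Thompson disjointness result: that $\tau$ may be slid and isotoped until it is \emph{disjoint} from some minimal genus Seifert surface for a knot $K$. Your write-up instead attempts to prove the paper's main Theorem~\ref{theorem:TunnelNicelyInFiber}, that $\tau$ can be isotoped to lie \emph{in} the fiber $F$. Worse, you explicitly invoke Theorem~\ref{theorem:KnotDisjoint} as a tool in your first paragraph (``the minimal genus Seifert surface produced by Theorem~\ref{theorem:KnotDisjoint} is isotopic to $F$''), so as a proof of the stated theorem your argument is circular. Note also that the paper itself does not prove Theorem~\ref{theorem:KnotDisjoint}; it is quoted from \cite{SchaThoUTSS}, and the paper only sketches its mechanism in one sentence before extending the result to two-component fibered links in Theorem~\ref{theorem:Disjoint}.

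If your real target was Theorem~\ref{theorem:TunnelNicelyInFiber}, then your approach is genuinely different from the paper's, but it has a gap. The paper doubles $N\cong F\times I$ along $\partial n(K)|F$ to turn the arc $\tau$ into a closed curve $\wh\tau$ in $\wh F\times I$, builds a compressing disk for $\wh F$ after surgery on $\wh\tau$, and then applies the Scharlemann--Thompson surgery theorem (Theorem~\ref{theorem:Surgery}) to conclude $\wh\tau$ is parallel into $\wh F\times\{0\}$; the parallelism annulus is then cut back down to give the isotopy of $\tau$ into $F$. Your proposal instead uses a sweepout by fibers and thin-position bookkeeping to level $\tau$. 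The leveling step is exactly where your sketch is thinnest: you assert that incompressibility of the fibers together with the handlebody structure of $S^3\setminus n(K\cup\tau)$ lets you cancel critical points of the height on $\tau$, but you give no mechanism for this cancellation, and the handlebody $S^3\setminus n(K\cup\tau)$ is not even a submanifold of $F\times I$ (it contains the fiber you cut along), so it is unclear how to run an outermost-arc argument on the cocore disk $E$ inside the product. The paper's doubling trick is precisely what sidesteps this difficulty by converting the arc problem into a closed-curve problem to which a known surgery theorem applies.
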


The proof consists of arranging $K$, $\tau$, and a compressing disk for $S^3 \setminus n(K \cup \tau)$ in some minimal fashion, and showing that if $K \cap \tau \neq \emptyset$, then this would be a contradiction to those minimality assumptions. The result still holds for fibered links. 
 
\begin{theorem} \label{theorem:Disjoint} Suppose $K$ is an oriented, fibered link, and $\tau$ is an unknotting tunnel for $K$. Then $\tau$ may be slid and isotoped until it is disjoint from a fiber of $K$. 
\end{theorem}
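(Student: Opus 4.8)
The plan is to deduce Theorem~\ref{theorem:Disjoint} from Theorem~\ref{theorem:KnotDisjoint}. For a fibered link $K$ with fiber $F$, cutting $S^3 \setminus n(K)$ along $F$ produces a product $F \times I$; in particular $F$ is a minimal genus Seifert surface, it is incompressible and boundary incompressible in $S^3 \setminus n(K)$, and, as is well known, it is the unique minimal genus Seifert surface for $K$ up to isotopy. The argument of Scharlemann and Thompson for Theorem~\ref{theorem:KnotDisjoint} fixes a minimal genus Seifert surface and then isotopes $K$, $\tau$, and a compressing disk for $S^3 \setminus n(K \cup \tau)$ so as to minimize a suitable complexity, using the surface only through its incompressibility and boundary incompressibility. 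Thus, run with $F$ taken to be the fiber, it should produce an isotopy of $\tau$ off $F$ itself, which is exactly disjointness from a fiber; alternatively, one may invoke Theorem~\ref{theorem:KnotDisjoint} as a black box in the knot case and then appeal to uniqueness of the fiber surface.

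When $K$ is a knot, Theorem~\ref{theorem:Disjoint} is therefore essentially a restatement of Theorem~\ref{theorem:KnotDisjoint}, so the content is the case of a two-component link $K = K_1 \cup K_2$, for which Theorem~\ref{theorem:KnotDisjoint} does not literally apply. Here I would rerun the Scharlemann--Thompson argument in this setting. Let $F$ be a fiber, let $W = S^3 \setminus n(K \cup \tau)$ be the genus two handlebody, and let $D$ be a compressing disk for $W$. After general position, minimize lexicographically the number of points of $\tau \cap \mathrm{int}\,F$ and then the number of components of $D \cap F$. One then wants to show that this minimality forces $\tau \cap \mathrm{int}\,F = \emptyset$; a final small isotopy pushes $\tau$, endpoints and all, off $F$ into the product region $S^3 \setminus n(K \cup F)$, yielding the desired disjointness from a fiber.

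Assuming $\tau \cap \mathrm{int}\,F \neq \emptyset$, one plays the usual game: a circle of $D \cap F$ innermost on $D$ is eliminated using incompressibility of $F$ without increasing $|\tau \cap \mathrm{int}\,F|$, so we may assume $D \cap F$ consists of arcs; an outermost such arc on $D$ cuts off a subdisk of $D$ which, because $F$ is boundary incompressible and $D \cap F$ is already minimal, must instead guide an isotopy of $\tau$ that reduces $|\tau \cap \mathrm{int}\,F|$, a contradiction. The step that genuinely differs from the knot case, and which I expect to be the main obstacle, is the bookkeeping forced by the two components: $\tau$ has one endpoint on $T_1 = \partial n(K_1)$ and one on $T_2 = \partial n(K_2)$, $\partial F$ meets each $T_i$ in a single longitude, and the arcs of $D \cap F$ together with the subarcs of $\partial D$ on $\partial n(K \cup \tau)$ must be tracked so that the outermost-arc surgeries interact correctly with the two distinct boundary tori. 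In particular one must split into cases according to whether $D$ is separating or non-separating in $W$, and according to which portion of $\partial(K \cup \tau)$ the endpoints of the outermost arc meet. Once these cases are handled as in \cite{SchaThoUTSS}, the contradiction is reached and Theorem~\ref{theorem:Disjoint} follows.
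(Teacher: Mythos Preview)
Your overall strategy matches the paper's: the knot case is indeed immediate from Theorem~\ref{theorem:KnotDisjoint} plus uniqueness of the fiber, and for a two-component link the paper likewise reruns the Scharlemann--Thompson outermost-arc argument with $F$ fixed as a fiber. So the plan is right.

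However, your proposal contains a real misconception that would stop you from completing the link case. You assert that the Scharlemann--Thompson argument ``uses the surface only through its incompressibility and boundary incompressibility,'' and that the outermost subdisk ``must instead guide an isotopy of $\tau$ that reduces $|\tau\cap\mathrm{int}\,F|$.'' In the paper's actual execution this is false in two places. First, when both ends of the outermost arc $f$ lie on $\partial F$ and $e$ is inessential in $F$, one subcase is disposed of by showing $K_1$ bounds a disk disjoint from $K_2$, so $K$ is split---and one then uses that \emph{fibered links are not split}, not merely that $F$ is incompressible. Second, and more substantially, when both ends of $f$ lie on the same meridian of $\tau$, the subdisk does \emph{not} directly give an isotopy of $\tau$; instead one builds an annulus $A$ from $n(\tau_0\cup E_0)$, slides one boundary onto $F$, and then invokes the product structure $S^3\setminus n(K\cup F)\cong F\times I$ to conclude $A'$ is a product annulus and hence $e$ is boundary-parallel in $F$. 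Only then can one swap annuli to reduce $|\tau\cap F|$. Incompressibility and boundary incompressibility alone do not give this.

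Also, the relevant case split is not separating versus non-separating for $D$, but rather how many ends of $f=\partial E_0\setminus e$ lie on meridians of $\tau$ (none, one, or two on the same meridian), with further subcases according to whether the interior of $f$ runs over $\tau$ or stays on $\partial n(K)$. You should reorganize your case analysis along these lines and be prepared to use the fibration hypothesis, not just properties of a generic minimal-genus Seifert surface.
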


Our proof will largely mimic \cite{SchaThoUTSS}.

\begin{proof}
By \ref{theorem:KnotDisjoint}, an unknotting tunnel can be isotoped and slid to be disjoint from a minimal genus Seifert surface for a knot. But in a fibered knot complement, a fiber is the unique minimal genus Seifert surface, so the result follows. Henceforth, let us assume that $K$ is a two-component link, and let the two components of $K$ be $K_1$ and $K_2$. Observe that $\tau$ has one endpoint on each of the components of $K$. Choose a fiber $F$, and slide and isotope $\tau$, so as to minimize the number of intersections between $\tau$ and $F$. Our goal will be to prove that $\tau \cap F = \emptyset$.

Suppose, to the contrary, that after the slides and isotopies above, $\tau \cap F$ is non-empty. Let $E$ be an essential disk in the handlebody $S^3 \setminus n(K \cup \tau)$, chosen to minimize the number $|E \cap F|$ of components in $E \cap F$. If $|E \cap F| = 0$, then the incompressible $F$ would lie in a solid torus, namely (a component of) $S^3 \setminus n(K \cup \tau \cup E)$, and so be an annulus. The only fibered link with fiber an annulus is the Hopf link, in which case the result holds. So we may assume that $|E \cap F| > 0$. Furthermore, since $F$ is incompressible, we may assume that $E \cap F$ consists entirely of arcs.

Let $e$ be an outermost arc of $E \cap F$ in $E$, cutting off a subdisk $E_0$ from $E$. The arc $e$ is essential in $F \setminus \tau$, for otherwise we could find a different essential disk intersecting $F$ in fewer components. Let $f = \bd(E_0) \setminus e$, an arc in $\bd n(K \cup \tau)$ with each end either on a longitude $\bd F \subset \bd n(K)$ or a meridian disk of $\tau$ corresponding to a point of $\tau \cap F$.

If a meridian of $\tau$ is incident to exactly one end of $f$, then we can use $E_0$ to describe a simple isotopy of $\tau$ which reduces the number of intersections between $\tau$ and $F$. 

If no meridian of $\tau$ is incident to an end of $f$, then both ends of $f$ lie on $\bd F \subset \bd n(K)$. If the interior of $f$ runs over $\tau$, we have finished, for $f$ is disjoint from $F$. If the interior of $f$ lies entirely in $\bd n(K)$, and $e$ is essential in $F$, then $E_0$ would be a boundary compression disk for $F$, contradicting the minimality of the genus of $F$. If the interior of $f$ lies entirely in $\bd n(K)$ and $e$ is inessential in $F$, then the disk $D_0$ that it cuts off from $F$ necessarily contains points of $\tau$ (since $e$ is essential in $F \setminus \tau$). But then we could replace $D_0$ by $E_0$. If the loop formed by $f$ and $\bd D_0 \setminus e$ form a trivial loop on the torus of, say, $\bd n(K_1)$, then the new surface would, again, be a Seifert surface for $K$, consistent with the orientation of $K$ (and so be a fiber), but with fewer points of intersection $F \cap \tau$. If the loop formed by $f$ and $\bd D_0 \setminus e$ is essential in $\bd n(K_1)$, then the original disk $E_0$ could be slid across $D_0$ to show that $K_1$ is unknotted. But the interior of the disk $D_0$ is disjoint from $K_2$, so $K$ must be a split link, but split links do not fiber.

The only remaining possibility is that both ends of $f$ lie on the same meridian of $\tau$. In this case, $e$ forms a loop in $F$, and the ends of $f$ adjacent to $e$ both run along the same sub-arc $\tau_0$ of $\tau$. Since $f$ is disjoint from $F$, $\tau_0$ terminates on, say, $\bd n(K_1)$.

Then since the interior of $f$ is disjoint from $F$, $f$ must intersect $\bd n(K_1)$ either in an inessential arc in the torus or in a longitudinal arc. The former case is impossible, because the inessential disk cannot contain the other end of $\tau$ (since the other end of $\tau$ is on $\bd n(K_2)$), and so the disk can be isotoped away, reducing $|E \cap F|$. It follows that $f$ intersects the torus $\bd n(K_1)$ in a longitudinal arc. Then, $n(\tau_0 \cup E_0)$ is a thickened annulus $A$, defining a parallelism in $S^3$ between $K_1$ and the loop $e$ on $F$. Now, the boundary component of $A$ on $\bd n(K_1)$ can be slid across $\bd n(K_1)$, away from $e$, onto $F$, parallel to $\bd F$ in $F$. Since $K$ is a fibered link, the image of $A$, call it $A'$, is a product annulus in $S^3 \setminus n(K \cup F) \cong F \times I$. But then this demonstrates that $e$ itself is, in fact, parallel to $\bd F$ in $F$. Then, substituting $A$ for the annulus between $e$ and $\bd F$ in $F$ would create a Seifert surface of the same genus, still consistent with the orientation of $K$, thus a fiber, but with fewer intersections with $\tau$, a contradiction.

\end{proof}

Another theorem that we will find useful is also given by Scharlemann and Thompson. Yi Ni proves a more general result in \cite{NiDSYF3M}, though we will not need it here.

\begin{theorem}[\cite{SchaThoSKSI}] \label{theorem:Surgery} Suppose $F$ is a compact orientable surface, $L$ is a knot in $F \times I$, and $(F \times I)_{surg}$ is the $3$-manifold obtained by some non-trivial surgery on $L$. If $F \times \set{0}$ compresses in $(F \times I)_{surg}$, then $L$ is parallel to an essential simple closed curve in $F \times \set{0}$. Moreover, the annulus that describes the parallelism determines the slope of the surgery.
\end{theorem}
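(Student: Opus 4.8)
The plan is to analyze a compressing disk for $F \times \set{0}$ in $M := (F \times I)_{surg}$ by controlling how it meets the surgery region, and to extract from it an annulus realizing the asserted parallelism. Write $X = (F \times I) \setminus n(L)$ for the exterior of $L$, and regard $M = X \cup_{\bd n(L)} W$, where $W$ is the surgery solid torus; let $\gamma \subset \bd n(L)$ be the surgery slope, i.e. the meridian of $W$, and recall that non-triviality of the surgery means $\gamma$ is not the meridian of $L$. We may assume $F$ is neither a sphere nor a disk, since otherwise $F \times \set{0}$ carries no essential simple closed curve and there is nothing to prove; then $F \times I$ is irreducible and $F \times \set{0}$ is incompressible in $F \times I$.

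First I would take a compressing disk $D$ for $F \times \set{0}$ in $M$ and isotope it to minimize $|D \cap W|$. If $D \cap W = \emptyset$, then $D$ lies in $X \subset F \times I$ and compresses the incompressible surface $F \times \set{0}$ in $F \times I$, a contradiction; hence $D$ meets $W$. A standard innermost-circle argument on the curves $D \cap \bd W$, using the minimality of $|D \cap W|$ and irreducibility, then shows that $D \cap W$ is a nonempty union of meridian disks of $W$, so that $P := D \cap X$ is a planar surface whose boundary consists of the essential curve $c := \bd D \subset F \times \set{0}$ together with $n \geq 1$ parallel copies of the slope $\gamma$ on $\bd n(L)$.

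The heart of the argument is to reduce to the case $n = 1$, where $P$ is an annulus joining $c$ to a curve of slope $\gamma$ on $\bd n(L)$. For this I would first show that minimality of $|D \cap W|$ forces $P$ to be incompressible and $\bd$-incompressible in $X$, since any compression or boundary compression of $P$ would produce a compressing disk for $F \times \set{0}$ meeting $W$ in strictly fewer meridian disks. I would then play this essentiality against the product structure of $F \times I$: an essential planar surface in the product whose boundary meets $\bd n(L)$ in $n \geq 2$ parallel essential curves and $F \times \set{0}$ in a single essential curve cannot exist without contradicting the tautness of the product. I expect this step to be the main obstacle, as it requires the careful innermost/outermost and minimality bookkeeping of \cite{SchaThoSKSI}; the cleanest route is a short sutured-manifold or Euler-characteristic argument against the product structure, peeling off a single parallelism annulus.

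Finally, once $P$ is an annulus $A \subset X \subset F \times I$ with $\bd A = c \cup \gamma'$, where $c \subset F \times \set{0}$ is essential and $\gamma' \subset \bd n(L)$ has slope $\gamma$, the surface $A$ exhibits a parallelism carrying the core $L$ (pushed out to $\gamma'$) to $c$; thus $L$ is parallel to the essential simple closed curve $c$ in $F \times \set{0}$. Moreover the slope of $A$ on $\bd n(L)$ is exactly $\gamma$, the surgery slope, which yields the last sentence: the parallelism annulus determines the slope along which surgery compresses $F \times \set{0}$.
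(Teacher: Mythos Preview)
Your setup is fine, but the proof has a genuine gap at precisely the point you flag as ``the main obstacle'': reducing the planar surface $P$ to an annulus (i.e., forcing $n=1$). You suggest this will follow from ``careful innermost/outermost and minimality bookkeeping'' or ``a short sutured-manifold or Euler-characteristic argument,'' but neither is supplied, and in fact no such short argument is available. Incompressibility and $\partial$-incompressibility of $P$ in $X$ do not by themselves rule out essential planar surfaces with $n\geq 2$ boundary components on $\partial n(L)$; that this cannot happen in a product $F\times I$ is exactly the substantive content of the theorem, and it genuinely requires Gabai's result \cite{GabSKST} (for the annulus case) together with sutured-manifold machinery. So as written, the argument assumes what it is trying to prove.

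Note also that the route taken in \cite{SchaThoSKSI}, as summarized in the paper, is organized differently from yours. Rather than fixing a compressing disk and trying to simplify the planar surface it leaves in $X$, Scharlemann and Thompson simplify the ambient product: they locate product disks or product annuli in $F\times I$ disjoint from $L$ and cut along them, reducing the genus of $F$ step by step until they reach the base case $F=$ annulus, where Gabai's theorem on surgery on knots in solid tori applies directly. The reduction-to-annulus strategy is what makes the sutured-manifold input tractable; your compressing-disk approach does not obviously feed into Gabai's theorem, and you would need an independent (and nontrivial) argument to cap the number of boundary curves of $P$ on $\partial n(L)$.
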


The proof relies on sutured manifold theory, and a theorem of Gabai \cite{GabSKST}. Gabai proves the result for an annulus cross the interval. The idea of Scharlemann and Thompson's proof is to find product disks or annuli in $(F \times I)$ disjoint from the knot, and cut along these product pieces to reduce the complexity of the surface in question. This, with some additional work, allows them to apply the results of Gabai.

\section{Pushing a Tunnel into a Fiber}
\label{sec:PushingTunnelIntoFiber}

\begin{proof}[Proof of Theorem \ref{theorem:TunnelNicelyInFiber}]

By Proposition \ref{theorem:Disjoint}, $\tau$ can be isotoped and slid to be disjoint from a fiber. Let $F = F' \setminus n(K)$. Cut $S^3 \setminus n(K)$ along $F$, to obtain $N \cong F \times I$, a handlebody. Then $\tau \subset N$.

Now, as $\tau$ is an unknotting tunnel, there exists a compressing disk for $\partial n(K \cup \tau)$ in $S^3 \setminus n(K \cup \tau)$, say $D'$. Note that $D' \cap F \neq \emptyset$, for otherwise $F$ would be an essential surface in the solid torus $(S^3 \setminus n(K \cup \tau))|D'$, and thus a disk.

Consider $D' \cap F$. Since $F$ is incompressible and $N$ is irreducible, by standard innermost disk arguments we may assume there are no simple closed curves of intersection. Let $\alpha$ be an arc of intersection which is outermost in $D'$, cutting off a subdisk $D$. Then, $D$ is a disk in $N$ with boundary consisting of three types of arcs: a single essential arc in $F = F \times \set{0}$, $\alpha$; (several) arcs in $\partial n(K)$, call them $\nu_i$; and (several) arcs in $\partial n(\tau)$, $\lambda_j$. We may assume that every arc of $D \cap \partial n(\tau)$ is an essential spanning arc of the annulus $\partial n(\tau)$, for trivial arcs can be removed by isotopy.

Now, consider the double of $N$, along $\bd n(K) | F$. In other words, let $\wh{N}$ be the result of gluing two copies of $N$ together by the identity along $\bd F \times I$. Similarly, let $\wh{\tau}$ be the result of gluing two copies of $\tau$, one in each copy of $N$, along the boundary points; let $\wh{D}$ come from two copies of $D$, one in each copy of $N$, glued along the $\nu_i$; and let $\wh{\alpha}$ come from two copies of $\alpha$ in the same way. 

Then $\wh{D}$ is a planar surface with one boundary component corresponding to $\wh{\alpha}$, and several components coming from $\wh{\lambda_j}$, the doubles of $\lambda_j$ (see Figure \ref{Figure:Dhat}).

\begin{figure}[tb]
\begin{center}
\includegraphics[width=2in]{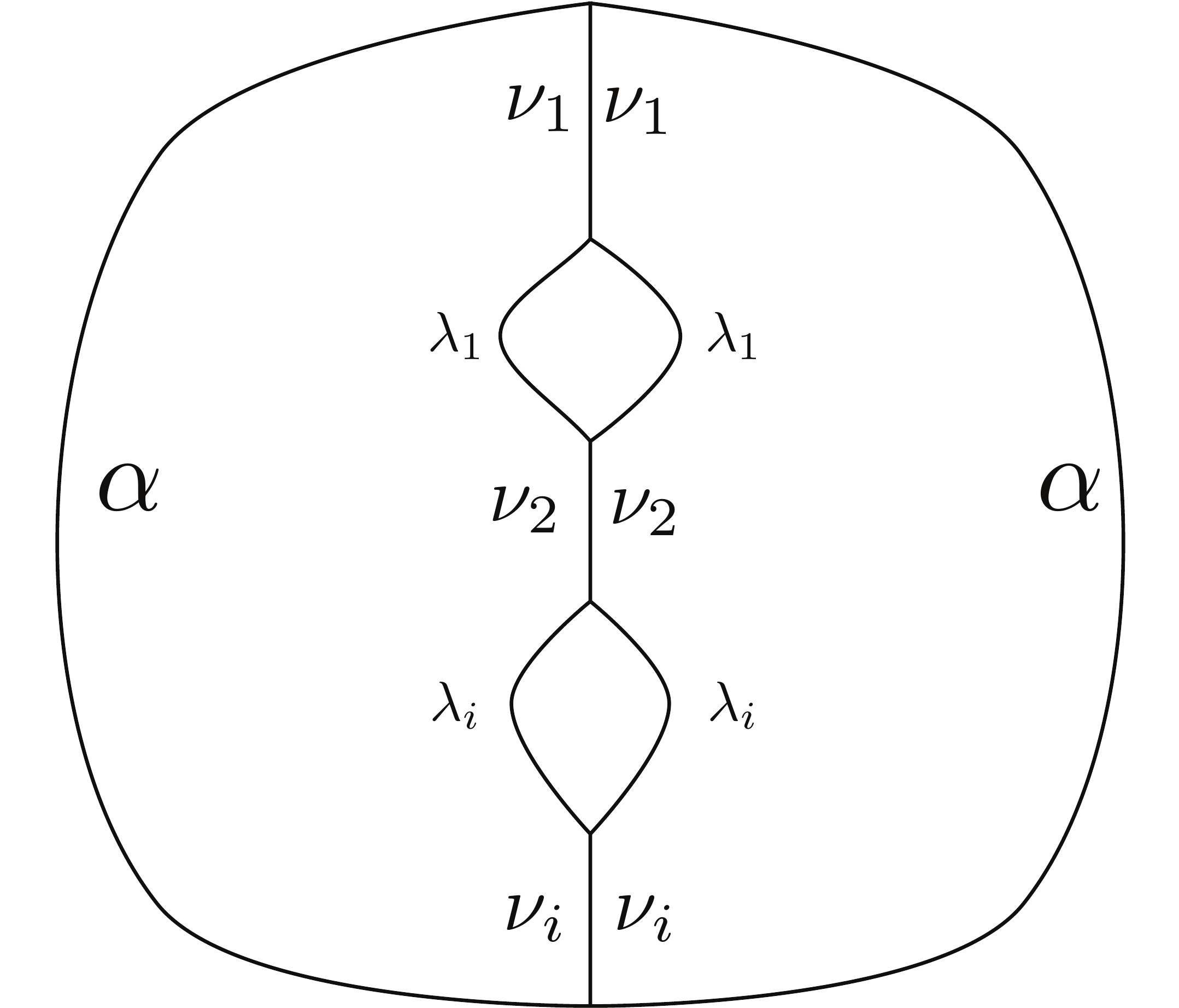}
\caption{$\wh{D}$}
\label{Figure:Dhat}
\end{center}
\end{figure}

Then, $\cup_j \wh{\lambda_j}$ is a collection of (parallel) simple closed curves on the torus $\bd n(\wh{\tau})$. Call the slope determined by these curves $\lambda$. If we perform $\lambda$-surgery on $\wh{\tau}$, the result is to cap off $\wh{D}$ with disks. Since $\alpha$ was essential in $F$, $\wh{\alpha}$ is essential in $\wh{F}$, so our capped off surface is a compression disk for $\wh{F}$ in $\wh{N} \cong \wh{F} \times I$. 

By Theorem \ref{theorem:Surgery}, $\wh{\tau}$ is parallel to an essential closed curve in $\wh{F} \times \set{0}$. That is, there exists an annulus $A$ properly embedded in $\wh{F} \times I$ with one boundary component on $\wh{F} \times \set{0}$, say $\psi$, and the other boundary component on $\bd n(\wh{\tau})$, parallel to $\wh{\tau}$, say $\phi$.

Since $\phi$ is parallel to $\wh{\tau}$, it must be a longitude of $\bd n(\wh{\tau})$, and in particular, $\nolinebreak{|\phi \cap (\bd F \times I) | = 2}$. So there are only two possibilities for arcs of intersection between $A$ and $\bd F \times I$ incident to $\phi$. Either, there is one arc of intersection which is trivial in $A$, or there are two arcs of intersection, both of which are essential in $A$ (see Figure \ref{Figure:Arcs}). The former case is impossible, because then the subdisk of $A$ cut off by the arc would show that $\tau$ was parallel into $\bd n(K)$, which would imply that $K$ was trivial. Therefore, there are exactly two arcs of $A \cap (\bd F \times I)$, both of which are essential in $A$.

\begin{figure}[tb]
\begin{center}
\includegraphics[width=4in]{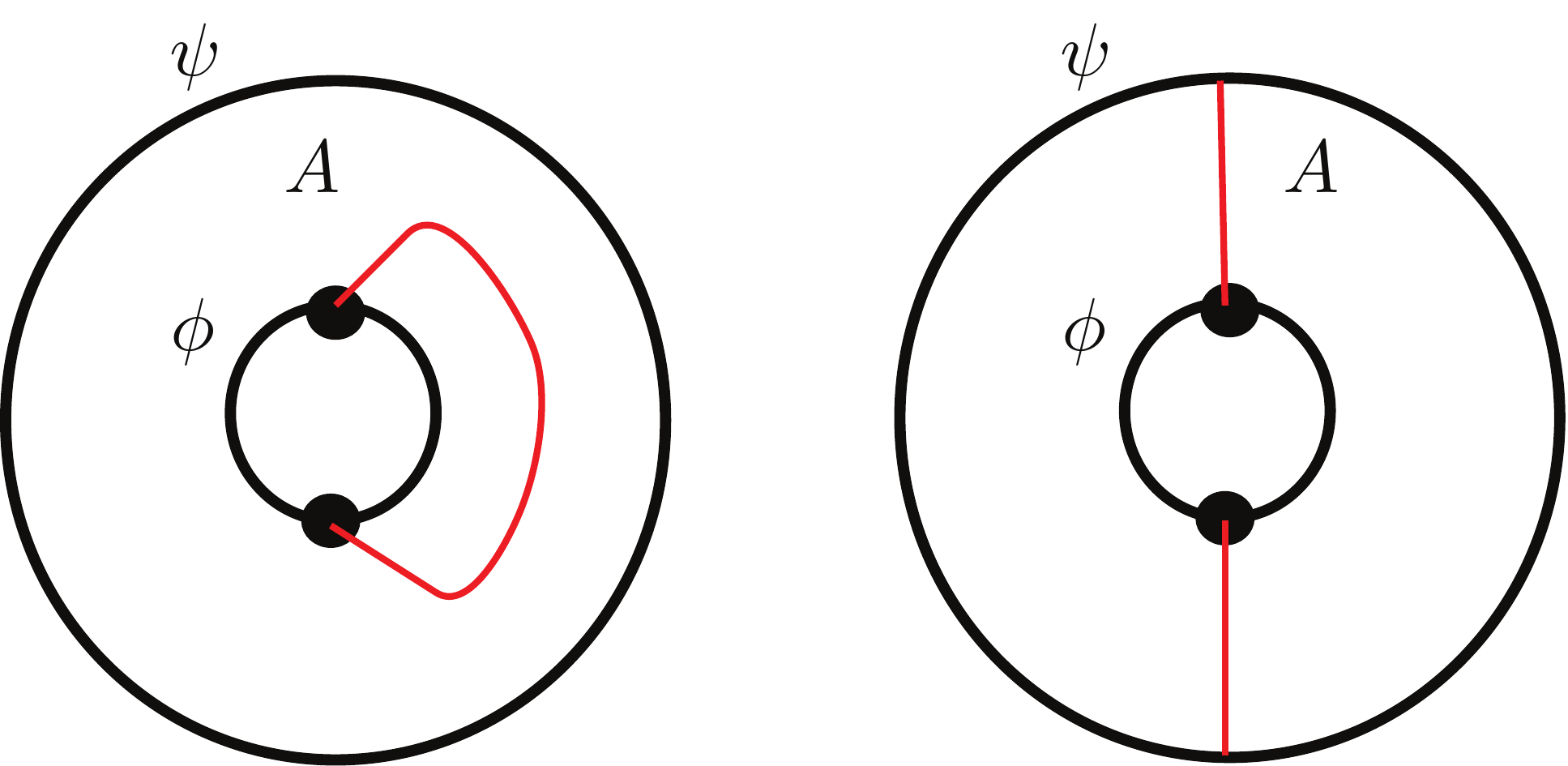}
\caption{Arcs of $A \cap \bd F \times I$ incident to $\phi$.}
\label{Figure:Arcs}
\end{center}
\end{figure}

If there were trivial arcs incident to $\psi$, then an outermost such arc in $A$ would give rise to a boundary compression for $F \times \set{0}$ in $S^3 \setminus n(K)$. This is impossible as well, so $\bd F \times I$ intersects $A$ in precisely two essential arcs, with no trivial arcs. Cutting $A$ along these arcs provides a parallelism between $\tau$ and the arc $\psi \cap (F \times \set{0}) \subset \wh{F} \times \set{0}$. Thus, $\tau$ can be isotoped to lie in the fiber.

\end{proof}


\begin{thebibliography}{10}

\bibitem{AdaUTH3M}
Colin Adams.
\newblock Unknotting tunnels in hyperbolic {$3$}-manifolds.
\newblock {\em Math. Ann.}, 302(1):177--195, 1995.

\bibitem{AdaReiUT2BKLC}
Colin~C. Adams and Alan~W. Reid.
\newblock Unknotting tunnels in two-bridge knot and link complements.
\newblock {\em Comment. Math. Helv.}, 71(4):617--627, 1996.

\bibitem{AkiNakSakSVGMOHDSWL}
Hirotaka Akiyoshi, Yoshiyuki Nakagawa, and Makoto Sakuma.
\newblock Shortest vertical geodesics of manifolds obtained by hyperbolic
  {D}ehn surgery on the {W}hitehead link.
\newblock In {\em K{NOTS} '96 ({T}okyo)}, pages 433--448. World Sci. Publ.,
  River Edge, NJ, 1997.

\bibitem{BakGriHedGDLSCKFH}
Kenneth~L. Baker, J.~Elisenda Grigsby, and Matthew Hedden.
\newblock Grid diagrams for lens spaces and combinatorial knot {F}loer
  homology.
\newblock {\em Int. Math. Res. Not. IMRN}, (10):Art. ID rnm024, 39, 2008.

\bibitem{BoiRosZieOHDTKERSFS}
Michel Boileau, Markus Rost, and Heiner Zieschang.
\newblock On {H}eegaard decompositions of torus knot exteriors and related
  {S}eifert fibre spaces.
\newblock {\em Math. Ann.}, 279(3):553--581, 1988.

\bibitem{CooLacPurLUT}
Daryl Cooper, Marc Lackenby, and Jessica~S. Purcell.
\newblock The length of unknotting tunnels.
\newblock {\em Algebr. Geom. Topol.}, 10(2):637--661, 2010.

\bibitem{EudUchNSLTN1}
Mario Eudave~Mu{\~n}oz and Yoshiaki Uchida.
\newblock Non-simple links with tunnel number one.
\newblock {\em Proc. Amer. Math. Soc.}, 124(5):1567--1575, 1996.

\bibitem{GabSKST}
David Gabai.
\newblock Surgery on knots in solid tori.
\newblock {\em Topology}, 28(1):1--6, 1989.

\bibitem{GirGCD3DS}
Emmanuel Giroux.
\newblock G\'eom\'etrie de contact: de la dimension trois vers les dimensions
  sup\'erieures.
\newblock In {\em Proceedings of the {I}nternational {C}ongress of
  {M}athematicians, {V}ol. {II} ({B}eijing, 2002)}, pages 405--414, Beijing,
  2002. Higher Ed. Press.

\bibitem{GodSchaThoLUT}
Hiroshi Goda, Martin Scharlemann, and Abigail Thompson.
\newblock Levelling an unknotting tunnel.
\newblock {\em Geom. Topol.}, 4:243--275 (electronic), 2000.

\bibitem{HeaSonUT-P237}
Daniel~J. Heath and Hyun-Jong Song.
\newblock Unknotting tunnels for {$P(-2,3,7)$}.
\newblock {\em J. Knot Theory Ramifications}, 14(8):1077--1085, 2005.

\bibitem{HedFHBCKALSS}
Matthew Hedden.
\newblock On {F}loer homology and the {B}erge conjecture on knots admitting
  lens space surgeries.
\newblock 2007, arXiv.org:0710.0357.

\bibitem{JohSBG2HS}
Jesse Johnson.
\newblock Surface bundles with genus two {H}eegaard splittings.
\newblock {\em J. Topol.}, 1(3):671--692, 2008.

\bibitem{JonC2GL}
Amelia~C. Jones.
\newblock Composite two-generator links.
\newblock {\em Topology Appl.}, 67(3):165--178, 1995.

\bibitem{KobCUT2BK}
Tsuyoshi Kobayashi.
\newblock Classification of unknotting tunnels for two bridge knots.
\newblock {\em GEOM.TOPOL.MONOGR.}, 2:259, 1999.

\bibitem{LacCAKTN1}
Marc Lackenby.
\newblock Classification of alternating knots with tunnel number one.
\newblock {\em Comm. Anal. Geom.}, 13(1):151--185, 2005.

\bibitem{MorOCTN1L}
Kanji Morimoto.
\newblock On composite tunnel number one links.
\newblock {\em Topology Appl.}, 59(1):59--71, 1994.

\bibitem{MorSakOUTK}
Kanji Morimoto and Makoto Sakuma.
\newblock On unknotting tunnels for knots.
\newblock {\em Math. Ann.}, 289(1):143--167, 1991.

\bibitem{NiKFHDFK}
Yi~Ni.
\newblock Knot {F}loer homology detects fibred knots.
\newblock {\em Invent. Math.}, 170(3):577--608, 2007.

\bibitem{NiDSYF3M}
Yi~Ni.
\newblock Dehn surgeries that yield fibred 3-manifolds.
\newblock {\em Math. Ann.}, 344(4):863--876, 2009.

\bibitem{OzsSzaKFHLSS}
Peter Ozsv{\'a}th and Zolt{\'a}n Szab{\'o}.
\newblock On knot {F}loer homology and lens space surgeries.
\newblock {\em Topology}, 44(6):1281--1300, 2005.

\bibitem{SaiDS11KLS}
Toshio Saito.
\newblock Dehn surgery and {$(1,1)$}-knots in lens spaces.
\newblock {\em Topology Appl.}, 154(7):1502--1515, 2007.

\bibitem{SakUTCDPTBOC}
Makoto Sakuma.
\newblock Unknotting tunnels and canonical decompositions of punctured torus
  bundles over a circle.
\newblock {\em S\=urikaisekikenky\=usho K\=oky\=uroku}, (967):58--70, 1996.
\newblock Analysis of discrete groups (Kyoto, 1995).

\bibitem{SakWeeECDHLC}
Makoto Sakuma and Jeffrey Weeks.
\newblock Examples of canonical decompositions of hyperbolic link complements.
\newblock {\em Japan. J. Math. (N.S.)}, 21(2):393--439, 1995.

\bibitem{SchaThoUTSS}
Martin Scharlemann and Abigail Thompson.
\newblock Unknotting tunnels and {S}eifert surfaces.
\newblock {\em Proc. London Math. Soc. (3)}, 87(2):523--544, 2003.

\bibitem{SchaThoSKSI}
Martin Scharlemann and Abigail~A. Thompson.
\newblock Surgery on a knot in (surface {${\times}\ I$}).
\newblock {\em Algebr. Geom. Topol.}, 9(3):1825--1835, 2009.

\bibitem{WilLSSPSTC}
Michael~J. Williams.
\newblock Lens space surgeries \& primitive/{S}eifert type constructions.
\newblock {\em preprint}, 2007.

\end{thebibliography}

\end{document}